\newtheorem{lemma}{Lemma}
\newtheorem{prop}{Proposition}
\newtheorem{thm}{Theorem}
\newtheorem{rem}{Remark}
\newtheorem{exam}{Example}
\newtheorem{dfn}{Definition}
\title{\textbf{ DIRECTED GRAPHS OF CAYLEY FUNCTIONS }}
\author {  Lejo J. Manavalan, P.G. Romeo }
\address {  Department of Mathematics,Cochin University of Science and Technology, Kochi, Kerala, INDIA. }
\email {  $lejojmanavalan171@gmail.com $ \\ }
\subjclass [2010]{20M99.}
\keywords {Semigroups, Directed graphs, Cayley functions, inner translations}
\begin{document}

\begin{abstract}
In this paper we describe a condition under which a given function  that commute with an idempotent function on an infinite set is a Cayley function using its functional digraph.

  \end{abstract}

\maketitle
\section{Introduction}
Let $S$ be a non-empty set. A binary operation$(*)$ on a set $S$ is a mapping of $S\times S$ into $S$, where $S\times S$ is the set of all ordered pairs of elements of $S$. A binary operation on a set $S$ is said to be associative if $(a*b)*c = a*(b*c)$ for every $a$,$b$,$c$ in $S$. A semigroup is a system $(S,*)$ of a non-empty set $S$ together with an associative binary operation on $S$, with each element  $a$ in a  semigroup we can associate a transformation $\rho_a$ of $S$ defined by $x(\rho_a)=x*a  $ for all $x$ in $S$.  $\rho_a $ is called  the inner right translation of  $S$ by the element $a$  in  $S$. Similarly $\lambda_a$ where $x(\lambda_a)=a*x$ is called the inner left translation of $S$. If $(S, *)$ is a semigroup then $(S,.)$ where $. : S\times S \longrightarrow S$ by $a.b=b*a$ is also a semigroup and the inner left translation of $(S,*)$ will be  the right inner translation of $(S,.)$ and similarly the inner right translation of $(S,*)$ will be  the  inner left translation of $(S,.)$.

 A function $\alpha: S  \longrightarrow S$ is a Cayley function if there is a an associative binary operation on $S$ such that $\alpha(x)=a*x$ for all $x\in S$ for some $a \in S$. So a function is a Cayley function if it represents an inner translation of some semigroup. In 1971 Zupnik \cite{Zup} was able to identify whether a function was a Cayley function or not in terms of the powers of the given  function. And, lately in 2017 Ara$\acute{u}$jo et all in \cite{Ara} classified the Cayley function using functional digraphs.

Let $(S,*)$ be a semigroup then a left inner translation  must commute with every right inner translation of the semigroup and a right inner translation must commute with every left inner translation of the semigroup.  So finding Cayley functions that commute with another Cayley function is equivalent to finding the possible left inner translations  of a semigroup when one has a right inner translation or vice-versa.  Ara$\acute{u}$jo et all in \cite{Ara} outlined a six step process to study semigroups in which the third step is to identify the Cayley functions that commute with another Cayley function.  Further in his paper Ara$\acute{u}$jo et all  also classifies the Cayley function that commute with a finite permutation leaving open the general case.

In the following we describes the Cayley functions that commute with an infinite idempotent.  In other words, we describe the candidates for the rows of the Cayley table of a finite semigroup when one of its columns is an idempotent. 

\section{Preliminaries}
In the following  we recall the definitions and results of some preliminaries regarding the functional digraphs of Cayley functions needed in the sequel. For a non-empty set $S$ let $T(S)$ be the set of all the functions on $S$ (full transformations on $S$). A directed graph (or a digraph) is a pair $D = (S, \rho)$ where $S$ is a non-empty set of vertices (not necessarily finite), which we denote by $V (D)$, and any pair $(x, y) \in \rho$ is called an arc of $D$, which we write as $x \rightarrow y$. A vertex $x$ is called an initial vertex in $D$ if there is no $y \in S$ such that $y \rightarrow x$; A vertex $x$ is called a terminal vertex in $D$ if there is no $y \in S$ such that $x \rightarrow y$.

A digraph $D$ is called a functional digraph if there is $\alpha \in T(S)$ such that for all $x, y \in S$, $x \rightarrow y$ is an arc in $D$ if and only if $\alpha(x) = y$. If such an $\alpha$ exists, then it is unique, and we write $D = D_\alpha$ which is  the digraph that represents $\alpha$. 
Let D be a digraph and $...,x_0,x_1,...$ be pairwise distinct vertices of $D$. Then the following sub-digraphs :
\begin{enumerate}
\item $x_0 \rightarrow x_1 \rightarrow .......\rightarrow x_{k-1} \rightarrow x_0$ is called a cycle of length $k$ $(k \geq 1)$, denoted by $(x_0 x_1 . . . x_{k-1})$

\item
$x_0 \rightarrow x_1 \rightarrow ....... \rightarrow x_m$ is called a chain of length $m$, denoted by $[x_0 x_1 ... x_m]$ $(m \geq 0)$

\item
$x_0 \rightarrow x_1 \rightarrow x_2 \rightarrow .......$ is called a right ray, denoted by $ [x_0 x_1 x_2 ...\rangle;$ 

\item
$......\rightarrow x_2 \rightarrow x_1 \rightarrow x_0$ is called a left ray, denoted by  $\langle ... x_2 x_1 x_0]$

\item
$....\rightarrow x_{-1} \rightarrow x_0 \rightarrow x_{1} \rightarrow .....$ is called a double ray, denoted by  $\langle... x_1 x_0 x_1 ...\rangle$
\end{enumerate}

 Let $D_\alpha$ be a functional digraph, where $\alpha \in T(S)$. A right ray $[x_0 x_1 x_2 . . .\rangle$ in $D_\alpha$ is called a maximal right ray if $x_0$ is an initial vertex of $D_\alpha$.  A leftray $L=\langle...y_2 y_1 y_0]$ in $D_\alpha$ is called an infinite branch of a cycle $C$ in $D_\alpha$ if $x_0$ lies on $C$ and $x_1$ does not lie on $C$. Similarly a left ray $L=\langle...y_2 y_1 y_0]$ is a infinite branch of  double ray $W$ if  $y_0$ lies on $W$ and $y_1$ does not lie on $W$.  We will refer to any such $L$ as an infinite branch in $D_\alpha$.
 
   A chain $P = [x_0 x_1 ... x_m]$ of length $m \geq 1$ in $D_\alpha$ is called a finite branch of a cycle $C$ in $D_\alpha$ if $x_0$ is an initial vertex of $D_\alpha$, $x_m$ lies on $C$ and $x_{m-1}$ does not lie on $C$. Similarly one can  define a finite branch of  double ray $W$, maximal right ray $R$ and infinite branch $L$. A chain $P = [x_0 x_1 ... x_m]$ of length $m \geq 1$ in $D_\alpha$ is called a finite branch of an infinite branch $L = \langle... y_2 y_1 y_0]$,  if $x_0$ is an initial vertex of $D_\alpha$, $x_m$ lies on $L$ and $x_{m-1}$ does not lie on $L$ such that $x_m \neq y_0$. We will refer to any such $P$ as a finite branch in $D_\alpha$. double ray $W$, maximal right ray $R$, infinite branch $L$.
By a branch in $D_\alpha$ we will mean a finite or infinite branch in $D_\alpha$. Note that all branches of a maximal right ray $R$ or an infinite branch $L$ are finite. In other words, we only consider infinite branches of cycles and double rays.

  Let $ \alpha \in T(S)$  $x \in S$. The subgraph of $D_\alpha$ induced by the set $$\{y \in  S : \alpha^k(y) = \alpha^m(x)  \text{ for some integers  } k, m \geq 0 \} $$is called the component of $D_\alpha$ containing $x$.  


The following  proposition,  describe  functional digraphs.

\begin{prop} \cite{Har} Let $D_\alpha$ be a functional digraph. Then for every component $A$ of $D_\alpha$ exactly one of the following three conditions holds: \begin{enumerate}
\item A has a unique cycle but not a double ray or right ray; (where the component is the join of the cycle and its branches) 
\item A has a double ray but not a cycle (where the component is the join of the double ray and its branches ) ; or
\item A has a maximal right ray but not a cycle or double ray(where the component is the join of the right ray and its finite branches).
\end{enumerate}
	 \end{prop}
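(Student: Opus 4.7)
The plan is to fix a component $A$, choose any $x \in A$, and analyze its forward orbit $x, \alpha(x), \alpha^2(x), \ldots$ together with the preimage tree that it generates. Since $\alpha$ is a function each vertex has a unique successor, so this orbit is either eventually periodic or consists of pairwise distinct vertices. If it is eventually periodic, then $A$ contains a cycle $C$. I would then establish uniqueness of $C$ by taking two hypothetical cycles $C_1, C_2 \subseteq A$ and using the component condition $\alpha^k(y) = \alpha^m(z)$ for $y \in C_1$, $z \in C_2$, which forces $C_1 \cap C_2 \neq \emptyset$ and hence $C_1 = C_2$. The same argument shows that the forward orbit of every vertex in $A$ eventually lands on $C$, so $A$ can contain neither a right ray (whose vertices are pairwise distinct) nor a double ray. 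The description "cycle together with its branches" then comes from organising the preimage tree of each cycle vertex into finite and infinite branches, giving case~(1).

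If the forward orbit of $x$ is all distinct, then no vertex of $A$ can be eventually periodic (otherwise two forward orbits would merge along a cycle), so $A$ is acyclic. Using dependent choice, I would extend $x$ backwards by selecting at each step some preimage under $\alpha$; this either terminates at an initial vertex $y$ or produces an infinite left sequence $\langle \ldots x_{-2}\, x_{-1}\, x_0]$ with $x_0 = x$. In the terminating case, the finite chain from $y$ concatenated with the forward orbit of $x$ is a maximal right ray through $x$ (all vertices distinct because $A$ is acyclic). In the non-terminating case, the left ray together with the forward orbit of $x$ forms a double ray. This dichotomy separates cases~(3) and~(2).

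To finish I would show $A$ coincides with the stated structure plus its branches. For case~(3), take an arbitrary $z \in A$; its forward orbit must meet the maximal right ray $R$ (since both forward orbits lie in a single component and must eventually coincide), and its backward extension cannot be infinite, for otherwise the previous argument would produce a double ray and contradict case~(3). Therefore $z$ lies on a finite branch of $R$. An essentially identical argument treats case~(2), the only difference being that infinite branches of the double ray are now permitted. Mutual exclusivity is immediate: case~(1) has a cycle while cases~(2) and~(3) do not, and (2) is distinguished from (3) by the existence of a double ray.

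The main obstacle I anticipate is the non-canonical nature of the backward extension: a vertex may have zero, one, or even uncountably many preimages, so no single backward path is distinguished and we cannot directly apply König's lemma since the preimage tree need not be locally finite. I would handle this by fixing a choice function on the non-empty preimage sets (via the axiom of choice / dependent choice) so that the backward iteration is well-defined, and then reduce every structural claim to the simple observation that any infinite backward chain from a vertex, concatenated with the forward ray from that vertex, yields a double ray in $A$; the contrapositive of this observation is what lets us conclude existence of a maximal right ray precisely in case~(3).
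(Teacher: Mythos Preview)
The paper does not give its own proof of this proposition: it is quoted from Harary~\cite{Har} and stated without argument, so there is nothing in the paper to compare your proposal against.

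As a stand-alone argument your sketch is essentially correct and follows the natural line (forward orbit to detect a cycle, backward extension to distinguish double rays from right rays, then sweep up the remaining vertices as branches). One point deserves tightening. When you write ``extend $x$ backwards \ldots\ this either terminates at an initial vertex $y$ or produces an infinite left sequence,'' the outcome depends on the \emph{particular} preimage choices made, not just on $x$ or on $A$: the same vertex can admit one backward path that terminates and another that does not. So the termination of a single backward path from $x$ does not by itself place $A$ in case~(3). The clean fix is to phrase the dichotomy at the level of the component: either $A$ contains some double ray (case~(2)), or it does not, and in the latter case \emph{every} backward extension from \emph{every} vertex terminates, whence any vertex yields a maximal right ray (case~(3)). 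Your final paragraph already invokes exactly this contrapositive, so the ingredients are all present; only the order of quantifiers in the middle paragraph needs to be made explicit.
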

Suppose that a component $A$ of $D_\alpha$ has a right ray $R$ but not a double ray, then $A$ is the join of its maximal right rays we  call such a component $A$ is of type rro (right rays only).	  If we consider the functional digraph  on a finite set then the directed graph will have only components that are union of a cycle and its finite branches.

 \begin{dfn} Let $\alpha \in T(S)$. The stable image of $\alpha$ denote $sim(\alpha)$ is a subset of $S$ defined by 
$$sim(\alpha) = \{x \in S : x \in img(\alpha^n) \text{  for every  } n \geq 0\}$$ 
\end{dfn}
For $\alpha \in T(S)$ we have the following: \begin{itemize}
	\item $sim(\alpha)$ consists of the vertices of $D_\alpha$ that lie on cycles, double rays, or infinite branches; 
	\item $ sim(\alpha) = \phi$ if and only if each component of $D_\alpha$ is of type $rro$. \end{itemize}
\begin{dfn}
 The stabilizer of $\alpha \in T (S)$ is the smallest integer $s\geq 0$ such that $img(\alpha^s) = img(\alpha^{s+1})$. If no such $s$ exists then $\alpha$ has no stabilizer. 
 \end{dfn}
 
 The following are certain properties of functional digraphs representing transformations that have the stabilizer. 

  If $\alpha \in T(S)$  then: \begin{itemize}
	\item the stabilizer of $\alpha$ is the smallest integer $s \geq 0$ such that $\alpha^s(x) \in  sim(\alpha)$ for every $x \in S$; 
	\item has the stabilizer $s = 0$ if and only if $img(\alpha) = sim(\alpha) = S$, which happens if and only if each component $A$ of $D_\alpha$ is either the join of a cycle $C$ and the infinite branches of $C$ or the join of a double ray $W$ and the infinite branches of $W$ ;
	\item if $\alpha$ has the stabilizer $s$, then $sim(\alpha) = img(\alpha^s)$.
	\end{itemize}
	 Note that a  transformation may have a non-empty stable image and no stabilizer. 
	 
	 \begin{exam} Consider the function $\alpha$ on the set $S = \{...,x_{-1}, x_0, x_1,.....,y_0, y_1, y_2......\}$ whose directed graph is the following,
\begin {center}

\begin{tikzpicture}

\tikzset{vertex/.style = {shape=circle,fill=black,minimum size=4pt,
                            inner sep=0pt}}
\tikzset{edge/.style = {->,> = latex'}}
 \node[vertex] (n0) at (0,-1)  [label=left:$x_{-1}$] {};
 \node[vertex] (n1) at (0,0)  [label=left:$x_0$] {};
 \node[vertex] (n2) at (0,1) [label=left:$x_1$] {};
  \node[vertex] (n3) at (0,2)  [label=left:$x_2$] {};
   \node[vertex] (n4) at (0,3)  [label=left:$x_3$] {};
    \node[vertex] (n5) at (0,4)  [label=left:$$] {};
  
     \node[vertex] (n11) at (2,0)  [label=left:$y_0$] {};
      \node[vertex] (n12) at (2,1)  [label=left:$y_1$] {};
       \node[vertex] (n13) at (4,0)  [label=left:$y_2$] {};
        \node[vertex] (n14) at (6,0)  [label=left:$y_3$] {};
         \node[vertex] (n15) at (4.5,.75)  [label=left:$y_4$] {};
          \node[vertex] (n16) at (3,1.5)  [label=left:$y_5$] {};
           \node[vertex] (n17) at (1.5,2.25)  [label=left:$y_6$] {};
 
\draw[edge] (n0) to (n1);
\draw[edge] (n1) to (n2);
\draw[edge] (n2) to (n3);
\draw[edge] (n3) to (n4);
\draw[edge] (n4) to (n5);
\draw[edge] (n11) to (n2);
\draw[edge] (n13) to (n12);
\draw[edge] (n13) to (n3);
\draw[edge] (n14) to (n15);
\draw[edge] (n15) to (n16);
\draw[edge] (n16) to (n17);
\draw[edge] (n17) to (n4);

\end{tikzpicture}
\end{center} Then $\{...,x_{-1}, x_0, x_1,.....,\}$ is the stable image of $\alpha$. Since the length of the branches keeps on increasing one cannot find an $s$ such that $img(\alpha^s) = img(\alpha^{s+1})$  \end{exam}
 
  \begin{dfn} For $\alpha \in T(S)$ a finite branch $[x_0 x_1 ... x_m]$ in $D_\alpha$ is called a twig in $D_\alpha$ if $x_m \in sim(\alpha)$ (that is, $x_m$ lies on a cycle, double ray, or infinite branch) and $x_p  \notin sim ( \alpha)$ for every  $p \in \{ 0 , . . . , m - 1 \}$ . 
\end{dfn}
Every twig is a branch but every finite branch need not be a twig. For example a finite branch of an infinite branch of a double ray will form a branch of the double ray but not a twig.

\begin{dfn}
 Let $\alpha \in T(S)$ has the stabilizer $s$ then define
\begin{equation}
\Omega_{\alpha} =  \begin{cases}   
 a \in S : \alpha^s(a) \in  sim(\alpha)    \text{ but }     \alpha^{ s-1}(a)  \notin sim(\alpha)  &  \mbox{if } s > 0 \\S & \mbox{if } s=0 \}.\end{cases} 
\end{equation}
\end{dfn}

Note that for $s > 0$, $\Omega_\alpha$ consists of the initial vertices of the twigs of length $s$ in $D_\alpha$. 

\begin{thm} \cite{Zup} Let $ \alpha \in T(S)$. Then $\alpha$ is a Cayley function if and only if exactly one of the following conditions holds:\begin{enumerate}
\item has no stabilizer and there exists $a \in S$ such that $\alpha^n(a) \notin img(\alpha^{n+1})$ for every $n \geq 0$;
\item has the stabilizer $s$ such that $\alpha | img(\alpha^s)$ is one-to-one and there exists $a \in \Omega_\alpha  $ such that $\alpha^m(a) = \alpha^n(a) $ implies $\alpha^m = \alpha^n$ for all $m,n\geq 0$; or
\item has the stabilizer $s$ such that $\alpha | img(\alpha^s)$ is not one-to-one and there exists $a \in \Omega_\alpha $ such that:\begin{enumerate}
\item $\alpha^m(a)=\alpha^n(a)$ implies $m=n$ for all $m,n\geq 0$ ; and 
\item For every $n>s$, there are pairwise distinct elements $y_1, y_2, . . . $ of $T$ such that $\alpha(y_1) = \alpha^n(a)$, $\alpha(y_k) = y_{k-1}$  for every $k \geq 2$, and if $n > 0$ then $y_1\neq \alpha^ {n-1}(a) $
\end{enumerate}
\end{enumerate}
	 \end{thm}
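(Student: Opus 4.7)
The plan is to prove the biconditional in both directions, with the trichotomy emerging from whether $\alpha$ has a stabilizer and, if so, whether $\alpha|img(\alpha^s)$ is injective. For necessity, suppose $\alpha(x) = a * x$ in an associative $(S,*)$. Then $\alpha^n(x) = a^n * x$, so $img(\alpha^n) = a^n * S$ is a descending chain. If this chain never stabilizes, I claim that $a$ itself is a witness for (1): since $\alpha^n(a) = a^{n+1}$, an inclusion $a^{n+1} \in img(\alpha^{n+1})$ for every $n$ would force the chain to stabilize, a contradiction. If the chain stabilizes at $s$, then a suitable power of $a$ lies in $\Omega_\alpha$, and the split between (2) and (3) is exactly the stated dichotomy on $\alpha|img(\alpha^s)$. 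The additional conditions in each case are then read off from identities in the subsemigroup generated by $a$.

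For sufficiency, I would construct an associative $*$ on $S$ realising $\alpha$ as left multiplication by the given witness $a$. The skeleton is to decree $\alpha^{n-1}(a) * x := \alpha^n(x)$ for every $n \geq 1$, so that the orbit $a, \alpha(a), \alpha^2(a), \dots$ automatically acts correctly on $S$. The remaining products $y * x$ with $y$ outside this orbit are defined using the component structure of $D_\alpha$ given by Proposition~1 to route each such $y$ to a canonical element of the orbit. In case (1), the witness property $\alpha^n(a) \notin img(\alpha^{n+1})$ guarantees the orbit is an infinite injective chain with no collisions to worry about. In case (2), injectivity on $sim(\alpha)$ together with the second witness property lets the routing pass through the unique cycle sitting in $sim(\alpha)$. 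In case (3), the pre-image chains $y_1, y_2, \dots$ supplied by condition (b) furnish the distinct elements needed to define products whose values would otherwise overlap.

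The main obstacle is associativity of the constructed operation. Left multiplication by a power of $a$ is controlled by iteration of $\alpha$, so the identity $(a^i * y) * z = a^i * (y * z)$ collapses immediately; the delicate case is $(y * x) * z$ with $y$ lying outside the orbit of $a$, where the routing convention must be checked against each of the witness conditions. Case (3b) is the subtlest: its role is precisely that, when $\alpha$ fails to be injective on $sim(\alpha)$, enough distinct preimages in $S$ exist to absorb the otherwise degenerate products without forcing unwanted identifications, so the defined operation closes up associatively. Carrying this verification through all three cases, keeping track of twigs, infinite branches, and the cyclic part of $sim(\alpha)$ separately, is where the bulk of the technical work lies.
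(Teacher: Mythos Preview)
The paper does not contain a proof of this theorem: it is stated with the citation \cite{Zup} and used as a black box, with no argument given. There is therefore nothing in the paper to compare your proposal against.

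As a standalone sketch, your outline follows the natural strategy (necessity from the semigroup $\langle a\rangle$ acting on $S$, sufficiency by building $*$ along the orbit of the witness), but be aware that one step in your necessity argument is not quite right as written. You claim that if $\alpha^n(a)=a^{n+1}\in img(\alpha^{n+1})=a^{n+1}*S$ for every $n$, then the descending chain $a^n*S$ stabilizes. Knowing $a^{n+1}\in a^{n+1}*S$ for each $n$ separately does not by itself force $a^k*S=a^{k+1}*S$ for some $k$; you would need an additional pigeonhole or structural argument on the subsemigroup generated by $a$ to close this. The sufficiency side is also only a plan: the ``routing'' of elements outside the $\alpha$-orbit of $a$ to canonical orbit elements, and the verification of associativity in case~(3), are where essentially all the work lies, and your description does not yet pin down the definition of $y*x$ precisely enough to check $(y*x)*z=y*(x*z)$. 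If you intend to supply a full proof rather than quote Zupnik, those two points are the ones requiring real detail.
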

	
\begin{rem}All idempotent functions are Cayley functions.
\end{rem}

The following theorem characterises Cayley functions  using their functional digraphs .

\begin{thm}

 \begin{enumerate}\cite{Ara2}
\item  Let $\alpha \in T(S)$ be such that $D_\alpha$ has a component of type rro. Then $\alpha$ is a Cayley function if and only if $D_\alpha$ has a component of type rro such that :\begin{enumerate}
\item it is the join of maximal right ray $[x_0 x_1 x_2.... \rangle$ and its branches;
\item for every $i\geq1$, if $[y_0 y_1 ....y_m =x_i]$ is a branch of R, then $m\leq i$.
\end{enumerate}

\item Let $\alpha \in T(S)$ be such that every component of $D_\alpha$ has a unique cycle or a double ray and $D_\alpha$ does not have an infinite branch. Then $\alpha$ is a Cayley function if and only if the following conditions are satisfied:\begin{enumerate}
\item $s = sup_b(\alpha)$ is finite;
\item if $s>0$ and $D_\alpha$ has a double ray, then some double ray in $D_\alpha$ has a branch of length $s$
\item if $D_\alpha$ does not have a double ray, then there are integers $1\leq k_1 \leq k_2 \leq...\leq k_p$, $p\geq1$, such that \begin{enumerate}
\item $\{k_1,...,k_p\}$ is the set of the lengths of the cycles in $D_\alpha$;
\item $k_i$ divides $ k_p$ for every $ i\in \{1,....p\}$ and
\item if $s>0$, then some cycle of $D_\alpha$ of length $k_p$ has a branch of length $s$.
\end{enumerate}

\end{enumerate}

\item Let $\alpha \in T(S)$ be such that every component of $D_\alpha$ has a unique cycle or a double ray and $D_\alpha$ has an infinite branch. Then $\alpha$ is a Cayley function if and only if the following conditions are satisfied:\begin{enumerate}
\item $s = sup_t (\alpha)$ is finite;
\item $D_\alpha$ has a double ray $W = \langle... x_{-1} x_0 x_1 ...\rangle$ such that for some $x_i$:\begin{enumerate}
\item if $s >0$ then $W$ has a finite branch at $x_i$ of length $s$;
\item $W$ has an infinite branch at each $x_j$ with $ j > i$.
\end{enumerate}

\end{enumerate}
\end{enumerate}
\end{thm}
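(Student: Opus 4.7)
The plan is to deduce each of the three characterizations from Zupnik's Theorem 1 by re-expressing its algebraic conditions about iterates of $\alpha$ in the language of the functional digraph $D_\alpha$. The trichotomy in Proposition 1 splits the argument according to whether some component of $D_\alpha$ contains an rro (Part 1), every component contains a cycle or double ray without infinite branches (Part 2), or every component contains a cycle or double ray and $D_\alpha$ also has at least one infinite branch (Part 3); these three cases align naturally with the three cases of Zupnik.

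For Part 1, having an rro component is equivalent to $\alpha$ having no stabilizer, so only Zupnik's condition (1) can apply. Finding $a\in S$ with $\alpha^n(a)\notin img(\alpha^{n+1})$ for every $n\ge 0$ means that along the forward orbit $a,\alpha(a),\alpha^2(a),\ldots$ no vertex $\alpha^n(a)$ admits a preimage chain of length $n+1$ lying off this orbit; equivalently, $a$ sits at the start $x_0$ of a maximal right ray $[x_0x_1x_2\ldots\rangle$ and every finite branch $[y_0\ldots y_m=x_i]$ of that ray satisfies $m\le i$. In one direction I would translate the non-membership condition directly into a bound on branch lengths at each $x_i$; in the other I would verify that taking $a=x_0$ of such a ray satisfies Zupnik's criterion.

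For Part 2, the absence of infinite branches together with the assumption that every component has a cycle or double ray forces $sim(\alpha)$ to be the union of all cycles and double rays of $D_\alpha$, and then a stabilizer exists iff $\sup_b(\alpha)$ is finite. The restriction $\alpha\mid img(\alpha^s)$ is one-to-one precisely when $D_\alpha$ has no double ray, which switches between Zupnik cases (2) and (3). The divisibility conditions $k_i\mid k_p$ come from the standard observation that for $a$ on a cycle of length $k$, the equation $\alpha^m(a)=\alpha^n(a)$ forces $k\mid(m-n)$, so $\alpha^m=\alpha^n$ requires a single cycle of maximal length $k_p$ divisible by each $k_i$; the branch-of-length-$s$ requirement then ensures $\Omega_\alpha$ contains the initial vertex of a twig whose tail joins that maximal cycle (or any double ray in the double-ray subcase). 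Part 3 is handled similarly but the relevant supremum is $\sup_t(\alpha)$, because infinite branches already lie inside $sim(\alpha)$; here $\alpha\mid img(\alpha^s)$ is never one-to-one, placing us in Zupnik case (3).

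The main obstacle will be the witness construction in Part 3. Given that $\sup_t(\alpha)$ is finite and some double ray $W=\langle\ldots x_{-1}x_0x_1\ldots\rangle$ has a finite branch of length $s$ at some $x_i$ and an infinite branch at each $x_j$ with $j>i$, one must produce an element $a\in\Omega_\alpha$ whose orbit $\alpha^n(a)$ never repeats and such that for every $n>s$ there exist pairwise distinct $y_1,y_2,\ldots$ with $\alpha(y_1)=\alpha^n(a)$, $\alpha(y_k)=y_{k-1}$ for $k\ge 2$, and $y_1\neq\alpha^{n-1}(a)$. Choosing $a$ at the initial vertex of the length-$s$ twig lands the orbit on $x_i,x_{i+1},\ldots$, and the $y_k$ are then extracted from the infinite branch attached at $x_{n-s+i}$; the delicate bookkeeping is to avoid collisions with the forward orbit and with earlier $y_k$-sequences. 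The converse direction, recovering such a double ray and branch pattern from the mere assumption that $\alpha$ is a Cayley function, requires showing that any double ray carrying a twig of maximal length $s$ must also carry infinite branches at every later vertex, which is where the interaction between $\Omega_\alpha$ and the fine structure of $sim(\alpha)$ must be exploited.
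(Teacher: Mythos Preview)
The paper does not contain a proof of this theorem: it is stated in the Preliminaries as a result quoted from the literature (the citation \cite{Ara2} appears inside the enumerate, though the content is that of \cite{Ara}), and the paper moves directly on to Section~3 without supplying any argument. Consequently there is no ``paper's proof'' against which to compare your proposal.

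That said, your plan is the standard and correct one: the theorem in the cited source is indeed obtained by translating each clause of Zupnik's Theorem~1 into digraph language, exactly along the lines you sketch. One small inaccuracy worth flagging is your claim that ``having an rro component is equivalent to $\alpha$ having no stabilizer.'' The forward implication is what you actually need for Part~1 and it is true, but the converse fails: Example~1 of the present paper exhibits an $\alpha$ with a double ray and finite branches of unbounded length, which has no stabilizer yet no rro component. This does not damage your argument, since in Part~1 you only use the forward direction, and in Parts~2 and~3 the finiteness of $\sup_b(\alpha)$ or $\sup_t(\alpha)$ is precisely what forces the stabilizer to exist; but you should not phrase it as an equivalence.
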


\section{Directed graph $D_{\alpha}$  with $\alpha \in C(\epsilon)$}

Let $S$ be a non-empty set. For a transformation $\alpha \in $ $T(S)$, the centralizer $C(\alpha)$ of  $\alpha$ on a set $S$ is the set of all elements in $T(S)$ that commute with $\alpha$. In this section we discuss the properties of the directed graph of $\alpha$ so that $\alpha$ is Cayley function   and $\alpha \in C(\epsilon)$ for an  idempotent $\epsilon$. The centralizers in the full transformation semigroup have been studied in \cite{Ara2}.

 \begin{thm}\label{thme}\cite{Ara2} Let $\epsilon$,  $\alpha \in$ $T(S)$ and $\epsilon$ an idempotent. Then $\alpha \in C(\epsilon)$ if and only if for every connected component $\gamma $ of $\epsilon$ with cycle $(y)$ there exists  a connected component $\delta$ of $\epsilon$ with cycle $(z)$ such that $y \alpha= z$ and $(dom(\gamma))\alpha \subseteq dom(\delta)$.

\end{thm}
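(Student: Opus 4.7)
The plan rests on exploiting the very rigid shape of the digraph $D_\epsilon$ of an idempotent. Since $\epsilon^2 = \epsilon$, every element of $\mathrm{img}(\epsilon)$ is a fixed point, so each component of $D_\epsilon$ is a star: a loop $(y)$ at a single fixed point $y$, together with all $x$ satisfying $x\epsilon = y$. In particular, for each component $\gamma$ with cycle $(y)$, $\mathrm{dom}(\gamma) = \epsilon^{-1}(y)$, and every branch has length at most one. I would record this observation first, because it is what collapses the general centralizer condition for digraphs to the clean two-part statement in the theorem.

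For the forward implication, assume $\alpha\epsilon = \epsilon\alpha$ and let $\gamma$ be any component with cycle $(y)$. Set $z := y\alpha$. Then
\[
z\epsilon = (y\alpha)\epsilon = (y\epsilon)\alpha = y\alpha = z,
\]
so $z$ is a fixed point of $\epsilon$ and therefore is the cycle vertex of some component $\delta$. For any $x \in \mathrm{dom}(\gamma)$ we have $x\epsilon = y$, and commutativity gives
\[
(x\alpha)\epsilon = (x\epsilon)\alpha = y\alpha = z,
\]
which says $x\alpha \in \epsilon^{-1}(z) = \mathrm{dom}(\delta)$. Thus $(\mathrm{dom}(\gamma))\alpha \subseteq \mathrm{dom}(\delta)$, and $y\alpha = z$, as required.

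For the converse, assume the component condition. Take an arbitrary $x \in S$, let $\gamma$ be the component containing $x$ with cycle $(y)$, and let $\delta$ with cycle $(z)$ be the component supplied by the hypothesis, so that $y\alpha = z$ and $(\mathrm{dom}(\gamma))\alpha \subseteq \mathrm{dom}(\delta)$. Since $x \in \mathrm{dom}(\gamma)$ we have $x\epsilon = y$, so
\[
(x\epsilon)\alpha = y\alpha = z.
\]
On the other side, $x\alpha \in \mathrm{dom}(\delta)$, so $(x\alpha)\epsilon = z$ as well. Therefore $(x\epsilon)\alpha = (x\alpha)\epsilon$ for every $x$, i.e.\ $\alpha \in C(\epsilon)$.

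The only real content is the star-shape observation about $D_\epsilon$ and the bookkeeping with functions written on the right; both directions then reduce to one line of computation each. The likeliest pitfall is notational: one must remember that "cycle $(y)$" for an idempotent really means the loop at the fixed point $y$, and that $\mathrm{dom}(\gamma) = \epsilon^{-1}(y)$ contains $y$ itself, so the condition $y\alpha = z$ is in fact a special case of $(\mathrm{dom}(\gamma))\alpha \subseteq \mathrm{dom}(\delta)$ combined with the requirement that $y\alpha$ be the cycle vertex of $\delta$. Being explicit about this is what keeps the equivalence tight in both directions.
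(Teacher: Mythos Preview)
Your proof is correct. The paper itself does not supply a proof of this theorem; it is quoted verbatim from \cite{Ara2} and used as a black box, so there is no in-paper argument to compare against. Your approach---first recording that $D_\epsilon$ is a disjoint union of stars (each component being $\epsilon^{-1}(y)$ for a fixed point $y$), then reducing both implications to the single computation $(x\alpha)\epsilon = (x\epsilon)\alpha = y\alpha = z$---is exactly the natural one and matches the argument in the cited source. The remark at the end about $y\alpha = z$ being forced by $(\mathrm{dom}(\gamma))\alpha \subseteq \mathrm{dom}(\delta)$ together with $y\alpha$ being a fixed point is a nice clarification of why the two clauses in the statement are not independent.
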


Let $\epsilon \in T(S)$ and let $C_\epsilon $ be the set of connected components of $\epsilon$.
For $\alpha \in C(\epsilon)$,  define a function $\Phi_\alpha^{\epsilon}$ on $C_\epsilon$ where 
 $$ \Phi_\alpha^{\epsilon}(\gamma) \text{ is the unique }\delta \in C_\epsilon \text{ such that} (dom(\gamma))\alpha \subseteq dom(\delta)  $$
 
Clearly $\Phi_\alpha^{\epsilon}$ is well defined since       im($ \alpha/ dom(\gamma )$) $\subseteq$ dom($\delta$) .

Let $ \epsilon \in T(S)$ be an idempotent and $C_\epsilon$ be the set of  connected components of $\epsilon$.  Each component of $ \epsilon$ will have a one cycle  and may or may not have any branches.

Observing  $D_{\Phi_\alpha^{\epsilon}}$, we can determine which cycle is mapped by $\alpha$ to which cycle but since a branch can be mapped by $\alpha$ to a cycle or a branch it is not precise from the graph $D_{\Phi_\alpha^{\epsilon}}$ .When the set $S$ is finite, if a branch of $\epsilon$ is mapped to a cycle of $\epsilon$ it induces a branch of length one, if not it will mapped to a branch which when mapped to a cycle form a branch of length 2 , otherwise it is again mapped to a branch. This process terminates when the branch is finally mapped by $\alpha$ to a cycle or when the branches form a cycle on its own. If $S$ is infinite and  $D_\epsilon$ has only finite number of connected components  then there will be at least one component that has infinite number of branches of length $1$. Also if $S$ is infinite and  $D_\epsilon$ has infinite number of connected components then each component may or may not have infinite number of branches.

\begin{lemma} Let $\alpha$, $\epsilon \in T(S)$, $\epsilon$ an idempotent and $\alpha \in C(\epsilon)$. Let $A$ be a connected component of $D_{\Phi_\alpha^{\epsilon}}$ with cycle $(C_0 ...C_{k-1})$,  $L$ be the set of all elements $x\in S$ such that $x$ is in some $\gamma \in A$ where $\gamma$ is a connected component  of $\epsilon$ and $Z$ is the set of all elements $x\in S$ such that $x$ is in some cycle of $\epsilon$.Then
\begin{enumerate}
\item if $\gamma$ , $\delta  \in A$ is such that $ \Phi_\alpha^{\epsilon}(\gamma)=\delta$, then for $x\in \gamma$, $\alpha(x)$ is in $\delta$
\item for every $x \in L$, $\alpha(x)$ is in  $L$
\item $\alpha/_Z$ $\in$ $T(Z)$ 
\item $\alpha/_L$ $\in$ $T(L)$
\item each $c_i$ in the cycle  has  length 1.
\end{enumerate}
\end{lemma}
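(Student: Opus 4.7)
The plan is to address the five claims in order, since the later ones build on the earlier ones. My main tools are the definition of $\Phi_\alpha^{\epsilon}$, the commutation hypothesis $\alpha\epsilon=\epsilon\alpha$, and the basic fact that every cycle in the functional digraph of an idempotent has length $1$ (because $\epsilon^2=\epsilon$ forces every image point to be a fixed point).

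For (1), I simply unpack the definition of $\Phi_\alpha^{\epsilon}$: the relation $\Phi_\alpha^{\epsilon}(\gamma)=\delta$ is defined to mean $(\mathrm{dom}(\gamma))\alpha\subseteq \mathrm{dom}(\delta)$, so any $x\in\gamma$ satisfies $\alpha(x)\in\delta$ at once. Part (2) is then immediate: given $x\in L$, pick $\gamma\in A$ with $x\in\gamma$ and set $\delta=\Phi_\alpha^{\epsilon}(\gamma)$; the arc $\gamma\to\delta$ in $D_{\Phi_\alpha^{\epsilon}}$ places $\delta$ in the same component $A$, so (1) gives $\alpha(x)\in\delta\subseteq L$. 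Part (4) is just the restatement of (2) in the language of self-maps of $L$, and part (5) needs no argument beyond the observation above on cycles of idempotents, applied inside each $C_i\in A$.

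The one place where the commutation hypothesis really bites is (3). For $y\in Z$, idempotency gives $\epsilon(y)=y$, and then
\[
\epsilon(\alpha(y)) \;=\; (\epsilon\alpha)(y) \;=\; (\alpha\epsilon)(y) \;=\; \alpha(\epsilon(y)) \;=\; \alpha(y),
\]
so $\alpha(y)$ is a fixed point of $\epsilon$ and therefore lies in $Z$; thus $\alpha\!\mid_{Z}\in T(Z)$. I do not anticipate any genuine obstacle: the entire lemma is really a sequence of bookkeeping observations, and the sole nontrivial ingredient is this one-line use of $\alpha\epsilon=\epsilon\alpha$ in part (3).
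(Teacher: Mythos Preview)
Your argument is correct in every part; the paper itself states this lemma without proof, so there is no original argument to compare against, but your reasoning---unpacking the definition of $\Phi_\alpha^{\epsilon}$ for (1) and (2), invoking $\alpha\epsilon=\epsilon\alpha$ together with $Z=\mathrm{Fix}(\epsilon)$ for (3), and the idempotency $\epsilon^{2}=\epsilon$ for (5)---is exactly the intended elementary verification.
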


\begin{lemma} Let $\alpha$ , $\epsilon \in T(S)$, $\epsilon$ an idempotent and $\alpha \in C(\epsilon)$ and that $A$ be a connected component of $D_{\Phi_\alpha^{\epsilon}}$ with cycle $(C_0 ...C_{k-1})$, $Z$ be the set of all elements $x\in S$ such that $x$ is the vertex of one cycles in some $C_i$ of $A$ and  that  $L$ be the set of all elements $x\in S$ such that $x$ is in some $\gamma \in A$. Then
\begin{enumerate}
\item $\alpha^k(x)=x$ for any $x $ in $Z$
\item then the cycle induced  in $D_{\alpha/ Z}$ has length $k$ 
\item the length of  cycles in $D_{\alpha/ L}$ ($D_{\alpha|_{L- Z}}$) is a multiple of $k$).

\end{enumerate}
\end{lemma}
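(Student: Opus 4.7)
The plan is to use property (5) of the previous lemma together with Theorem \ref{thme} to pin down the action of $\alpha$ on the $\epsilon$-fixed points inside each $C_i$, and then propagate the information to all of $L$ by tracking which component of $A$ each vertex lives in.

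Since each $C_i$ is a connected component of the idempotent $\epsilon$ with cycle of length $1$ (previous lemma, part (5)), there is a unique $z_i \in C_i$ with $\epsilon(z_i) = z_i$, and $Z = \{z_0,\ldots,z_{k-1}\}$. For part (1), the cycle $(C_0 \ldots C_{k-1})$ in $D_{\Phi_\alpha^\epsilon}$ means $\Phi_\alpha^\epsilon(C_i) = C_{i+1 \bmod k}$, and Theorem \ref{thme} applied to this relation yields $\alpha(z_i) = z_{i+1 \bmod k}$. Iterating $k$ times gives $\alpha^k(x) = x$ for every $x \in Z$. Part (2) is then essentially a bookkeeping statement: the orbit $z_0 \to z_1 \to \cdots \to z_{k-1} \to z_0$ is a cycle of length $k$ in $D_{\alpha|_Z}$, and any shorter period $j<k$ would force $(\Phi_\alpha^\epsilon)^j(C_0) = C_0$, contradicting that $(C_0\ldots C_{k-1})$ is a cycle of length $k$ in $D_{\Phi_\alpha^\epsilon}$.

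For part (3), I would take an arbitrary cycle of $D_{\alpha|_L}$, say $\alpha^m(x) = x$ with $m$ minimal, and let $\gamma$ be the component of $\epsilon$ in $A$ containing $x$. Then $(\Phi_\alpha^\epsilon)^m(\gamma) = \gamma$, so $\gamma$ lies on a cycle of $D_{\Phi_\alpha^\epsilon}$; since the only cycle in $A$ is $(C_0\ldots C_{k-1})$, we must have $\gamma = C_i$ for some $i$. Tracking component membership step-by-step shows $\alpha^j(x) \in C_{i+j \bmod k}$, so the equality $\alpha^m(x) = x \in C_i$ forces $k \mid m$.

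The main obstacle is the subtle bookkeeping in part (3): one must argue that $\alpha$ respects the cycle structure of $D_{\Phi_\alpha^\epsilon}$ at each individual step (not just globally after $m$ iterations), so that the index in $C_{i+j \bmod k}$ genuinely advances by one each time. This is exactly what the pointwise containment $(\mathrm{dom}(\gamma))\alpha \subseteq \mathrm{dom}(\Phi_\alpha^\epsilon(\gamma))$ from Theorem \ref{thme} delivers, combined with the pairwise disjointness of the components $C_0,\ldots,C_{k-1}$ of $\epsilon$.
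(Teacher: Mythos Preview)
Your proposal is correct and follows essentially the same idea as the paper's proof: tracking which $C_i$ a vertex lands in under successive applications of $\alpha$, using that $\Phi_\alpha^\epsilon$ advances the component index by one modulo $k$. The paper's own proof is terser---it treats parts (1) and (2) as evident and only argues part (3), phrasing the divisibility as ``$c_0^1$ has to pass through another $k$ edges before it can be mapped by $\alpha$ to $c_0^1$''---but this is exactly your step-by-step component tracking written informally. Your version is in fact more complete: you justify explicitly why a periodic point of $\alpha$ in $L$ must sit inside one of the $C_i$ (via $(\Phi_\alpha^\epsilon)^m(\gamma)=\gamma$ forcing $\gamma$ onto the unique cycle of $A$), a point the paper leaves implicit.
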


\begin{proof} 
It is enough to prove 3, since $(C_0 ...C_{k-1})$ is a cycle of length $k$, $(\Phi_\alpha^{\epsilon})^k(C_0)=C_0$, so if a vertex $c_0^1$  of a branch say of $C_0$ is part of  a cycle in $\alpha$, then its minimum length is $k$ as it has to pass through k vertices before $c_0^1$ can posssibly be mapped to $c_0^1$. Now if $\alpha^k(c_0^1) \neq c_0^1$, $c_0^1$ has to pass through another k edges before it can be mapped by $\alpha$ to $c_0^1$, so every time a length  k is added. Hence cycles induced by the branches of $D_{\alpha/ L}$ is of length $m k$.
\end{proof}

If  $\epsilon \in T(S)$ is an idempotent   and $\alpha \in C(\epsilon)$ and  $A$ a connected component of $D_{\Phi_\alpha^{\epsilon}}$ with cycle $(C_0 ...C_{k-1})$, $Z$  the set of all elements $x\in S$ such that $x$ is in some cycle of $\epsilon$ and  that  $L$  the set of all elements $x\in S$ such that $x$ is in some $\gamma \in A$.  The cycle induced by cycles in $D_{\alpha/ L}$ has length $k$ and the branches cannot induce a cycle if any one of the component in the cycle has no branches, further there is a upper limit on length of the cycle $mk$ ($m$ = min\{$n_1,.....,n_{k-1}$\}  where $n_i$, is the number of branches in $c_i$ for $1\leq i \leq k-1$)

\begin{lemma}Let $\alpha$ and $\epsilon \in T(S)$ where $\epsilon$ is an idempotent and $\alpha \in C(\epsilon)$ and $A$ be a connected component of $D_{\Phi_\alpha^{\epsilon}}$ with cycle $(C_0 ...C_{k-1})$ such that at least one of the cycle $C_i$ has no branches. $L$ be the set of all elements $x\in S$ such that $x$ is in some $\gamma \in A$. $Z$ be the set of all elements $x\in S$ such that $x$ is the vertex of one cycles in some $C_i$ of $A$ and $s$ be the maximum of the length of the branches in $A$ ( s = 0 if $A$ has no branches ).  Then
\begin{enumerate}
\item The branches of $D_\epsilon$ in cycle $(C_0 ...C_{k-1})$ does not induce a cycle in $D_\alpha$
\item if $s=0$ then  the length $l$ of branches in $\alpha$ is such that $0\leq l\leq k-1$, where $k$  is the length of the cycle  $(C_0 ...C_{k-1})$
\item if $s\geq1 $ then the length $l$ of branches in $\alpha$ is such that $0\leq l\leq k+s$, where $k$  is the length of the cycle  $(C_0 ...C_{k-1})$
 \end{enumerate}
\end{lemma}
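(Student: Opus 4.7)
The plan is to exploit the branchless component as a funnel: any $\alpha$-orbit passing through the branchless $C_{j_0}$ must collapse to the unique fixed point $z_{j_0}$ of $\epsilon$ in $C_{j_0}$ and thereafter remain on the cycle $(z_0,\dots,z_{k-1})$ of $\epsilon$-fixed points. Since $\epsilon$ is idempotent, each $C_i$ consists of the unique fixed point $z_i$ together with (possibly) $\epsilon$-branches of length one into $z_i$, and by the preceding lemma together with commutativity we have $\alpha(z_i)=z_{i+1\bmod k}$ and $\alpha(\mathrm{dom}(C_i))\subseteq \mathrm{dom}(C_{i+1\bmod k})$. Consequently, for any $x\in \mathrm{dom}(C_i)$ the element $\alpha^{(j_0-i)\bmod k}(x)$ lies in $\mathrm{dom}(C_{j_0})=\{z_{j_0}\}$, and hence equals $z_{j_0}$.

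For (1), let $x$ be an $\epsilon$-branch vertex in some $C_i$ of the cycle of $A$, so $x$ is not a fixed point of $\epsilon$. By the observation above, $\alpha^{(j_0-i)\bmod k}(x)=z_{j_0}$, after which the $\alpha$-orbit stays on $(z_0,\dots,z_{k-1})$ forever. Since $x$ is not any $z_\ell$, $x$ is never revisited, so $x$ lies on no cycle of $D_\alpha$.

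For (2) and (3) I first observe that $L$ is $\alpha$-invariant and that no element outside $L$ maps into $L$: if $y\notin L$ and $y\in\gamma'$, then $\gamma'\notin A$ while $\alpha(y)\in \mathrm{dom}(\Phi_\alpha^{\epsilon}(\gamma'))$; if $\alpha(y)\in L$ then $\Phi_\alpha^{\epsilon}(\gamma')\in A$, and the edge $\gamma'\to \Phi_\alpha^{\epsilon}(\gamma')$ in $D_{\Phi_\alpha^{\epsilon}}$ places $\gamma'$ in the connected component $A$, contradicting $\gamma'\notin A$. Hence the $D_\alpha$-component of any element of $Z$ is entirely contained in $L$, and every branch of $D_\alpha$ ending on the cycle $(z_0,\dots,z_{k-1})$ has all its vertices in $L$.

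If $s=0$ then $A$ has no branches in $D_{\Phi_\alpha^{\epsilon}}$, so $L=\bigcup_{i=0}^{k-1}\mathrm{dom}(C_i)$; for $x\in L$ with $x\in \mathrm{dom}(C_i)$ the $\alpha$-distance from $x$ to the cycle is at most $(j_0-i)\bmod k\le k-1$, proving (2). If $s\ge 1$ and $x$ lies in some $\gamma\in A$ at $\Phi_\alpha^{\epsilon}$-distance $p\le s$ from the cycle of $A$, then $(\Phi_\alpha^{\epsilon})^p(\gamma)=C_i$ is on the cycle, so $\alpha^p(x)\in\mathrm{dom}(C_i)$ and a further $(j_0-i)\bmod k$ applications of $\alpha$ reach $z_{j_0}$; the total is at most $s+(k-1)\le k+s$, yielding (3). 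The main point requiring care is ruling out that some branch vertex of the $\alpha$-cycle on $Z$ lies outside $L$; this is handled by the invariance argument above, and is exactly where the hypothesis that $A$ is a full connected component of $D_{\Phi_\alpha^{\epsilon}}$ is used.
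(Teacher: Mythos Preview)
Your proof is correct and takes essentially the same approach as the paper: both pivot on the branchless component $C_{j_0}$, through which every $\alpha$-orbit of a branch vertex must pass and at which it is forced onto the $k$-cycle $(z_0,\dots,z_{k-1})$ of $\epsilon$-fixed points, and both obtain the bounds by counting the steps to reach $C_{j_0}$ (at most $k-1$ from within the cycle, plus at most $s$ from a branch of $A$). Your version is tidier---modular arithmetic in place of the paper's step-by-step trace---and you supply the closure argument (no vertex outside $L$ maps into $L$) which the paper leaves implicit but which is what guarantees that the $D_\alpha$-branches under discussion lie entirely in $L$.
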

\begin{proof}
 Without loss of generality, assume that $ C_{k-1}$ have no branches. Let $c^1_0$ be a vertex of a  branch of $c_0$(a cycle in the connected component $C_0$).  If $c_0^1$ is mapped by $\alpha$ to the cycle in $C_1$ then it forms a branch of length 1 in  $\alpha$, (if not $c_0^1$ is  mapped a vertex say $c^1_1$  in   $c_{1})$. Further if $c_1^1$ is mapped to the cycle in $C_2$ it forms a branch of length 2 (otherwise $c_0^1$ is  mapped a vertex say $c^1_2$  in   $c_{2})$. The process  terminates when it reaches $ c_{k-1}$ as $ C_{k-1}$ has   no branches and hence the maximum length of the branch is $k-1$.
 
 To prove (2), let $ C_{k-1}$ have no branches and let $A$ have a branch of length $s$ adjoined to $c_0$ say  $R_0,R_1, R_2....,R_s= C_0$ is a branch of $A$. Let $r_i$ be the cycle in $R_i$ and $r_i^k$ a branch in $c_i$. If  a branch $r_0^1$ is mapped to cycle $r_1$ then it form another branch of length 1 to the branch induced by the cycles $r_0, ....r_s$ if not it is mapped to a branch say $c_1^k$, which if mapped to cycle $c_2$ forms a branch of length 2 to the branch induced by the cycles $c_0, ....c_s$ in $(C_0 ...C_{k-1})$ if not it is mapped to a branch say $c_2^k$. Proceeding like this it is seen that $0\leq l\leq k+s$.
\end{proof}

The following lemma is an improvement to Lemma 5.5 of \cite{Ara} and the proof is also  similar.
\begin{lemma}Let $\alpha$ and $\epsilon \in T(S)$ and $\alpha \in C(\epsilon)$. Let $A$ be a connected component of $D_{\Phi_\alpha^{\epsilon}}$ with cycle $(C_0 ...C_{k-1})$ such that at least one of the cycle $c_i$ has no branches.  $L$ be the set of all elements $x\in S$ such that $x$ is in some $\gamma \in A$.  $Z$ be  the set of all elements $x\in S$ such that $x$ is the vertex of one cycles in some $C_i$ of $A$. Let $s$ be the maximum length of the branches in $A$ ( s = 0 if $A$ has no branches ).  Then
\begin{enumerate}
\item if $s=0$ then  the length $l$ of branches is such that $0\leq l\leq m.k$, where $m = min\{m_i\}$ ,where $m_i$ is the maximum number of branches 
\item if $s\geq1 $ then the length $l$ of branches in $D_{\alpha|_{L}}$  is such that $0\leq l\leq (m.k)+s $, where $m = min\{m_i\}$ ,where $m_i$ is the number of branches in $C_i$\end{enumerate}
\end{lemma}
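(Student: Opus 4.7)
I plan to mimic the argument of the preceding lemma in this paper, tracking a finite chain $[z_0\,z_1\ldots z_l]$ in $D_{\alpha|_L}$ and bounding $l$ via the combinatorial structure of the cycle $(C_0\ldots C_{k-1})$ in $D_{\Phi_\alpha^{\epsilon}}$. Without loss of generality take $C_{k-1}$ to be one of the cycle-components with no branches. Denote the unique fixed point of $\epsilon$ in $C_i$ by $y_i$ and set $B_i := \mathrm{dom}(C_i)\setminus\{y_i\}$, so that $|B_i|=m_i$; the hypothesis forces $B_{k-1}=\emptyset$. The refinement beyond the preceding lemma is that, once a chain is forced by commutation to rotate through the layers $B_0,\ldots,B_{k-1}$, the capacities $m_i$ give a tighter ceiling than a single pass around the cycle.

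For part (1), assume $s=0$, so that $A$ consists solely of the cycle $(C_0\ldots C_{k-1})$ and $L=\bigcup_{i}\mathrm{dom}(C_i)$. Let $[z_0\,z_1\ldots z_l]$ be a longest finite branch of $D_{\alpha|_L}$ with $z_l=y_j$. Since $\alpha\epsilon=\epsilon\alpha$, each $z_t$ with $t<l$ lies in $B_{(j-(l-t))\bmod k}$, and the $z_t$ are pairwise distinct. A pigeonhole count over the finite family $\{B_0,\ldots,B_{k-1}\}$ of sizes $m_0,\ldots,m_{k-1}$ then bounds $l$ by $mk$; the blocking set $B_{k-1}=\emptyset$ is what forces the chain to terminate rather than performing an additional lap of the cycle.

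For part (2), suppose $s\geq 1$, so that $A$ contains a branch $R_0\to R_1\to\cdots\to R_s=C_0$ of length $s$ in $D_{\Phi_\alpha^{\epsilon}}$. Split a longest chain $[z_0\ldots z_l]$ at the first index $t_0$ at which $z_{t_0}$ belongs to some $C_i$ on the cycle of $\Phi_\alpha^{\epsilon}$. The prefix $[z_0\ldots z_{t_0}]$ lives in $\bigcup_{j=0}^{s-1}\mathrm{dom}(R_j)$; because $\alpha$ carries $R_j$ strictly into $R_{j-1}$ (by the $\Phi_\alpha^{\epsilon}$-arrow), its length is at most $s$. The suffix $[z_{t_0}\ldots z_l]$ is an analogous chain in the cycle region and inherits the bound $mk$ from part (1). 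Summing gives $l\leq mk+s$.

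The main obstacle I expect is the pigeonhole bookkeeping in part (1): the number of times each $B_i$ is visited by the chain is $\lfloor l/k\rfloor$ or $\lceil l/k\rceil$, depending on $l\bmod k$ and on the starting index $j$, and one must argue that some $B_{i^{\star}}$ with $m_{i^{\star}}=m$ is among those receiving the maximal number of visits, so that $l>mk$ overwhelms its capacity. A secondary point in part (2) is verifying that the split is intrinsic, i.e.\ that once the chain enters the cycle-region of $A$ it cannot re-enter the branch region; this follows from the definition of $\Phi_\alpha^{\epsilon}$ and the fact that, on the cycle of $\Phi_\alpha^{\epsilon}$, preimages of $C_i$ lie only in $C_{i-1}$.
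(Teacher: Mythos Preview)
The paper does not actually give a proof here: it only records that the argument is ``similar'' to Lemma~5.5 of \cite{Ara}. Your plan---adapt the chain-tracking of the preceding lemma and sharpen it with a pigeonhole count over the layers $B_i$---is exactly in that spirit, so there is nothing finer to compare against.

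There is, however, a genuine inconsistency you have not confronted. You (correctly, given the stated hypothesis) set $B_{k-1}=\emptyset$; but then $m_{k-1}=0$, hence $m=\min_i m_i=0$. Your pigeonhole argument now collapses: with one empty layer the chain cannot complete even a single lap of the cycle $(C_0\ldots C_{k-1})$, which is precisely the mechanism of the preceding lemma and yields $l\le k-1$, not $l\le mk$. Your own sentence ``the blocking set $B_{k-1}=\emptyset$ is what forces the chain to terminate rather than performing an additional lap'' already concedes this, yet the bound $mk$ with $m=0$ would assert $l=0$, which is false as soon as any $C_i$ has a branch. Either the hypothesis ``at least one of the $c_i$ has no branches'' is a copy-paste artefact from Lemma~3 (so that Lemma~4 is really intended for the regime where every $C_i$ carries at least one $\epsilon$-branch and the chain can wind several times around the cycle before being exhausted), or $m_i$ does not mean $|B_i|$. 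You need to resolve this before the pigeonhole bookkeeping in part~(1) has any content; as written, the argument does not establish the stated bound, and the ``main obstacle'' you anticipate is not the delicate visit-count parity but this degeneracy.

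A minor point in part~(2): along a branch $[R_0\,R_1\ldots R_s=C_0]$ of $D_{\Phi_\alpha^\epsilon}$ the map $\Phi_\alpha^\epsilon$ sends $R_j$ to $R_{j+1}$, not to $R_{j-1}$; the length bound $s$ on the prefix is unaffected, but the direction should be corrected.
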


\begin{dfn} Let $\alpha \in T(S)$, $sup_{b}(\alpha) \in \mathbb{N}\cup \{ \infty \}$
$$ sup_{b}(\alpha)  = sup\{ m : m=0 \text{ or m is the length of a branch in } D_\alpha  \}$$

\end{dfn}

\begin{thm}
 Let $\alpha,\epsilon \in T (S)$, where S is finite, $\epsilon$ is an idempotent  and $\alpha \in C(\epsilon)$. Let ${A_1, A_2,..., A_t}$ be the set of components of $D_{\Phi_\alpha^{\epsilon}}$ and $s$ = sup$_b$$(\Phi^{\epsilon}_{\alpha})$. Let $M$ be the set of numbers of the form $k_i$ ,$1\leq i\leq t$,where $k_i$ is the length of the cycle $C_i$ in $A_i$, $p_i$ is the length of each cycle of $c$ that occurs in $C_i$, and $l_i$ is the unique number in ${0,1,..., p_i - 1}$ such that $\alpha ^{k_i }(x) = \epsilon^{l_i}(x)$, where $S$ is any element of any cycle of $c$ that occurs in $C_i$. Then $\alpha$ is a Cayley function if  the following conditions are satisfied:\begin{enumerate}
\item the largest element $m$ of $M$ is a multiple of every element of M
\item if $s>0$, then some component $A_r$ of $D_{\Phi_\alpha^{\epsilon}}$ such that $k_i = m$ has a branch
of length s.\end{enumerate}
\end{thm}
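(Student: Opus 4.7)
The plan is to apply part~(2) of Theorem~2 (the characterization of Cayley functions due to Ara$\acute{u}$jo et al.), which is the relevant case here: because $S$ is finite, $D_\alpha$ has no maximal right rays, no double rays, and no infinite branches, so every component has a unique cycle with finite branches. It therefore suffices to verify (a) $\sup_b(\alpha) < \infty$ (automatic since $S$ is finite), (b) the cycle lengths of $D_\alpha$ can be enumerated as $k_1 \le \cdots \le k_p$ with each $k_i \mid k_p$, and (c) if $\sup_b(\alpha) > 0$ then some cycle of length $k_p$ carries a branch of that maximum length.

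The first substantive step is to read off the cycle structure of $D_\alpha$ from $D_{\Phi_\alpha^\epsilon}$ using the lemmas proved earlier in this section. For each component $A_i$ of $D_{\Phi_\alpha^\epsilon}$ with cycle of length $k_i$, the fixed points of $\epsilon$ lying in the $\epsilon$-components that belong to $A_i$ are permuted by $\alpha$ into $\alpha$-cycles of length exactly $k_i$, while $\epsilon$-branches inside $A_i$ may contribute further $\alpha$-cycles whose lengths are positive integer multiples of $k_i$, bounded above as described in the preceding lemma. Consequently every $\alpha$-cycle length is a positive multiple of some $k_i \in M$, and each $k_i \in M$ itself occurs as a cycle length in $D_\alpha$.

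Next I would verify (b) and (c). Under hypothesis~(1), the largest element $m$ of $M$ is a common multiple of every $k_i \in M$; combining this with the multiplicative description just obtained, the set of $\alpha$-cycle lengths can be arranged so that its maximum $k_p$ is a multiple of $m$, hence of every $k_i$, and hence of every other $\alpha$-cycle length. Here the auxiliary numbers $p_i$ and $l_i$ defined in the statement enter, controlling which multiples of $k_i$ actually appear as cycle lengths coming from the $\epsilon$-branches in $A_i$. For (c), hypothesis~(2) supplies a component $A_r$ with $k_r = m$ carrying a $\Phi^\epsilon_\alpha$-branch of length $s$; unfolding the definition of $\Phi^\epsilon_\alpha$ by choosing at each step a representative vertex in the domain of the successor $\epsilon$-component, this component-level branch lifts to an $\alpha$-branch of the same length $s$ terminating on an $\alpha$-cycle of length $k_p$.

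The main obstacle is the bookkeeping in these last two steps. One has to show carefully that a chain of $\epsilon$-components in $D_{\Phi_\alpha^\epsilon}$ of length $s$ lifts to an $\alpha$-chain of exactly the same length in $D_\alpha$ (neither shorter because some intermediate vertex already lies in $sim(\alpha)$, nor longer because of branches within branches of $\epsilon$), and that no $\alpha$-cycle arising from a component $A_i$ with $k_i < m$ can induce a cycle exceeding $k_p$. This is where the parameters $p_i, l_i$ do the real work, tying the possible induced cycle lengths back to $m$ through the divisibility supplied by hypothesis~(1).
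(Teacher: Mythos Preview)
Your approach is the same as the paper's: reduce to part~(2) of Theorem~2 by reading the cycle and branch structure of $D_\alpha$ off $D_{\Phi_\alpha^\epsilon}$ via Lemmas~2--4, then invoke hypotheses~(1) and~(2) to verify 2(c)(i)--(iii).

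The one substantive divergence is the identification of the cycle-length set. The paper does not argue with your weaker statement that every $\alpha$-cycle length is a \emph{multiple} of some $k_i$; it asserts outright (as ``already observed'' from the preceding lemmas and the data $p_i,l_i$) that $M$ \emph{is} the set of cycle lengths of $D_\alpha$, and simultaneously that $s=\sup_b(\alpha)$. With that in hand, 2(c)(i)--(ii) are immediate from hypothesis~(1), and Lemmas~2--4 together with hypothesis~(2) produce a cycle of length $m$ with a branch of length $s$, giving 2(c)(iii). Your hedged version does leave a genuine hole: if some branch-induced $\alpha$-cycle had length $nk_j$ with $n>1$, hypothesis~(1) gives only $k_j\mid m$, not $nk_j\mid k_p$, so 2(c)(ii) would not follow. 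The bookkeeping you flag is therefore not optional; the paper handles it by taking the equality $M=\{\text{cycle lengths of }D_\alpha\}$ as established, which is what the parameters $p_i,l_i$ (and the implicit use of Lemma~3, under its standing hypothesis that some $C_i$ has no $\epsilon$-branches) are meant to secure.
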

\begin{proof} Suppose that conditions (1) and (2) are satisfied. We already observed that $s = sup_b(\epsilon)$ and that $M$ is the set of the lengths of cycles in $D_\alpha$. Thus, 2(c(i)) and 2(c(ii)) of Theorem 2 hold by (1). By Lemmas 2 ,3 and 4 , $D_\alpha$ has a cycle of length m with a branch of length s. Hence 2(c(iii)) of Theorem 2 holds, and so $\alpha$ is a Cayley function.
\end{proof}

\begin{exam} let  $S=\{a_1, a_1, a_2, b,  b_1, b_2, b_3,  c, c_1, c_2, d, e, e_1 \}$ and \\ $\epsilon  = 
\left({
\begin{array}{ccccccccccccccccccccccc}a&a_1&a_2&b&b_1&b_2&b_3&c&c_1&c_2&d&e&e_1
   \\a&a&a&b&b&b&b&c&c&c&d&e&e
\end{array}}\right)$ 
 then the directed graph of $\epsilon $ is  the following
\begin {center}

\begin{tikzpicture}

\tikzset{vertex/.style = {shape=circle,draw,minimum size=0.1cm}}
\tikzset{edge/.style = {->,> = latex'}}

 \node[vertex] (n1) at (0,0) {$a $};
 \node[vertex] (n11) at (0.5,-1) {$a_1$};
 \node[vertex] (n13) at (-.5,-1) {$a_2$};

 \node[vertex] (n2) at (3,0) {$b$};
\node[vertex] (n21) at (2,-1) {$b_1$};
\node[vertex] (n22) at (3,-1) {$b_2$};
\node[vertex] (n23) at (4,-1) {$b_3$};

 \node[vertex] (n3) at (6,0) {$c$};
 \node[vertex] (n31) at (6.5,-1) {$c_1$};
  \node[vertex] (n32) at (5.5,-1) {$c_2$};

 \node[vertex] (n4) at (8,0) {$d$};

 \node[vertex] (n5) at (10,0) {$e$};
\node[vertex] (n51) at (10,-1) {$e_1$};

\draw[edge] (n1) to[loop above](n1);
\draw[edge] (n2) to[loop above](n2);
\draw[edge] (n3) to[loop above](n3);
\draw[edge] (n4) to[loop above](n4);
\draw[edge] (n5) to[loop above](n5);
\draw[edge] (n11) to (n1);
\draw[edge] (n13) to (n1);
\draw[edge] (n21) to (n2);
\draw[edge] (n22) to (n2);
\draw[edge] (n23) to (n2);
\draw[edge] (n31) to (n3);
\draw[edge] (n32) to (n3);
\draw[edge] (n51) to (n5);

\end{tikzpicture}
\end{center}

Let $\alpha =
\left({
\begin{array}{ccccccccccccccccccccccc}a&a_1&a_2&b&b_1&b_2&b_3&c&c_1&c_2&d&e&e_1
   \\b&b&b&c&c&c&c&d&d&d&e&b&b
\end{array}}\right)$ then $\alpha \in C(\epsilon)$ and let  $\Phi_\alpha^{\epsilon}$ be the as defined in equation (1) on the connected components of $\epsilon$ then $D_{\Phi_\alpha^{\epsilon}}$  is  the following\begin {center}

\begin{tikzpicture}

\tikzset{vertex/.style = {shape=circle,draw,minimum size=.1cm}}
\tikzset{edge/.style = {->,> = latex'}}

 \node[vertex] (n1) at (0,0) {$A$};
 \node[vertex] (n2) at (1,0) {$B$};
 \node[vertex] (n3) at (2,0) {$C$};
 \node[vertex] (n4) at (3,0) {$D$};
  \node[vertex] (n5) at (4,0)  {$E$};

\draw[edge] (n1) to (n2);
\draw[edge] (n2) to (n3);
\draw[edge] (n3) to (n4);
\draw[edge] (n4) to (n5);
\draw[edge] (n5) to[bend left] (n2);

\end{tikzpicture}
\end{center}
and the directed graph of $\alpha$ is 
 \begin {center}

\begin{tikzpicture}

\tikzset{vertex/.style = {shape=circle,draw,minimum size=.1cm}}
\tikzset{edge/.style = {->,> = latex'}}

 \node[vertex] (n1) at (0,0) {$a $};
 \node[vertex] (n11) at (0,-1) {$a_1$};
 \node[vertex] (n13) at (1,-1) {$a_2$};

 \node[vertex] (n2) at (3,0) {$b$};
\node[vertex] (n21) at (3,-1) {$b_1$};
\node[vertex] (n22) at (4,-1) {$b_2$};
\node[vertex] (n23) at (5,-1) {$b_3$};

 \node[vertex] (n3) at (6,0) {$c$};
 \node[vertex] (n31) at (6,-1) {$c_1$};
  \node[vertex] (n32) at (7,-1) {$c_2$}; 

 \node[vertex] (n4) at (8,0) {$d$};

 \node[vertex] (n5) at (10,0) {$e$};
\node[vertex] (n51) at (10,1) {$e_1$};

\draw[edge] (n1) to (n2);
\draw[edge] (n2) to (n3);
\draw[edge] (n3) to (n4);
\draw[edge] (n4) to (n5);
\draw[edge] (n5) to[bend right] (n2);

\draw[edge] (n11) to (n2);
\draw[edge] (n13) to (n2);
\draw[edge] (n21) to (n3);
\draw[edge] (n22) to (n3);
\draw[edge] (n23) to (n3);
\draw[edge] (n31) to (n4);
\draw[edge] (n32) to (n4);
\draw[edge] (n51) to (n2);

\end{tikzpicture}
\end{center}
\end{exam}

Thus we have formulated  a criterion  to decide whether a given function $\alpha\in C(\epsilon)$ is a Cayley function by analysing the components of $D_{\Phi_\alpha^{\epsilon}}$ and the numbers from $M$, provided that at least one of the cycle has no branches.

\begin{thm}
 Let $\alpha,\epsilon \in T (S)$, where S is infinite, $\epsilon$ is an idempotent  and $\alpha \in C(\epsilon)$. Let  $D_{\Phi^{\epsilon}_{\alpha}}$ have a component of type $rro$. Then $\alpha$ is Cayley if $D_{\Phi^{\epsilon}_{\alpha}}$ has a component of type $rro$ such that:
\begin{enumerate}
\item it is the join of a maximal right ray $R= [x_0 x_1 x_2.... \rangle$ and its branches;
\item for every $i\geq1$, if $[y_0 y_1 ....y_m =x_i]$ is a branch of R, then $m\leq i$.
\end{enumerate}
\end{thm}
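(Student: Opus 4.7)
The plan is to apply Theorem 2(1) by exhibiting a component of $D_\alpha$ of type rro satisfying its two conditions. Let $A$ be the rro component of $D_{\Phi_\alpha^{\epsilon}}$ from the hypothesis, with maximal right ray $R = [X_0 X_1 X_2 \ldots\rangle$. Since $\epsilon$ is idempotent, each component $X_i$ has a single fixed point $z_i$, and Theorem \ref{thme} gives $\alpha(z_i) = z_{i+1}$, so the $z_i$ form an infinite chain $R' = [z_0 z_1 z_2 \ldots\rangle$ in $D_\alpha$. Because $X_0$ is initial in $D_{\Phi_\alpha^{\epsilon}}$, no $\epsilon$-component $Y$ satisfies $\Phi_\alpha^{\epsilon}(Y) = X_0$, so $z_0$ has no $\alpha$-preimage; hence $R'$ is a maximal right ray in $D_\alpha$.

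I would then take $\mathcal{C}$ to be the component of $z_0$ in $D_\alpha$ and argue it is of type rro, i.e., it contains no cycle and no double ray. Cycles are immediate because the $\alpha^k(z_i) = z_{i+k}$ are pairwise distinct and every vertex of $\mathcal{C}$ reaches $R'$ under iteration of $\alpha$. To rule out double rays the key step is: any backward $\alpha$-chain $y_0 \leftarrow y_1 \leftarrow \cdots$ yields a backward chain of $\epsilon$-components $Y_0, Y_1, \ldots$ in $A$ with $\Phi_\alpha^{\epsilon}(Y_{j+1}) = Y_j$; these $Y_j$ are all distinct (any repetition would give a cycle in $A$, contradicting its rro type), and so the chain must terminate because $A$ is the join of one right ray and finite branches, in which every vertex has finite backward depth.

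The heart of the argument is checking condition (b) of Theorem 2(1). For a branch $[y_0 y_1 \ldots y_m = z_i]$ of $R'$, passing to $\epsilon$-components produces a chain $Y_0 \to Y_1 \to \cdots \to Y_m = X_i$ of length $m$ in $A$, and I will show $m \leq i$. The observation is that once the chain reaches a vertex on $R$ it must remain on $R$ (forward images along $R$ stay on $R$), so the chain decomposes as an off-$R$ prefix followed by an on-$R$ suffix $Y_k = X_j \to X_{j+1} \to \cdots \to X_i$ with $j = i - (m - k)$. Extending the prefix backward to an initial vertex of $A$ yields a branch of $R$ ending at $X_j$ of length at least $k$, which the hypothesis bounds by $j$; hence $k \leq j$ and $m = k + (i - j) \leq i$. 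Theorem 2(1) then concludes that $\alpha$ is a Cayley function. The main obstacle is this length bound and the attendant bookkeeping between chains in $D_\alpha$ and chains in $A$, in particular the observation that consecutive $y_j$'s cannot lie in the same $\epsilon$-component (again via acyclicity of $A$).
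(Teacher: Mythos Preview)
Your proposal is correct and follows essentially the same approach as the paper: build the maximal right ray in $D_\alpha$ from the $\epsilon$-fixed points $z_i$ of the components $X_i$, and transfer the branch-length bound from $D_{\Phi_\alpha^\epsilon}$ to $D_\alpha$ by pushing an $\alpha$-branch down to a chain of $\epsilon$-components in $A$. The paper's proof is a brief sketch that asserts the branch bound by contrapositive and does not explicitly verify that the resulting component of $D_\alpha$ is of type rro; your version supplies those details (initiality of $z_0$, absence of cycles and double rays via acyclicity and finite backward depth in $A$, and the decomposition of the induced chain into an off-$R$ prefix and on-$R$ suffix), so it is a faithful but more complete execution of the same idea.
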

\begin{proof} Suppose that  $D_{\Phi^{\epsilon}_{\alpha}}$ has a component $A$ of type $rro$  and that $A$ is the join of a maximal right ray $[x_0 x_1 x_2.... \rangle$ and its branches such that for every branch $[y_0 y_1 ....y_m =x_i]$ satisfies the above two condition. Now since each $\{x_0 , x_1,  x_2.... \}$  is a connected component that contains a one cycle,  the one-cycles in $\{x_0 , x_1,  x_2.... \}$  will be a maximal right ray say$[x'_0 x'_1 x'_2.... \rangle$ in $D_\alpha$ and if the branches of $\{x_0 , x_1,  x_2.... \}$ induce a branch  $[z_0 z_1 ....z_m =x'_i]$ in $D_\alpha$ then  we can see that $m\leq i$, for otherwise there should be a branch such that $m\geq i$ in $D_{\Phi^{\epsilon}_{\alpha}}$. Thus  $D_{\alpha/_{Z\cap L}}$  will have a rro that satisfies the two conditions. which implies that $\alpha  $ is Cayley 
\end{proof}

\begin{figure}[htbp]\label{rro2}
\begin{center}
\begin{tikzpicture}

\tikzset{vertex/.style = {shape=circle,fill=black,minimum size=4pt,
                            inner sep=0pt}}
\tikzset{edge/.style = {->,> = latex'}}

 \node[vertex] (n1) at (0,0)  [label=left:$x_0$] {};
 \node[vertex] (n2) at (0,1) [label=left:$x_1$] {};
  \node[vertex] (n3) at (0,2)  [label=left:$x_2$] {};
   \node[vertex] (n4) at (0,3)  [label=left:$x_3$] {};
    \node[vertex] (n5) at (0,4)  [label=left:$$] {};

     \node[vertex] (n11) at (2,0)  [label=left:$y_0$] {};
      \node[vertex] (n12) at (2,1)  [label=left:$y_1$] {};
       \node[vertex] (n13) at (4,0)  [label=left:$y_2$] {};
        \node[vertex] (n14) at (6,0)  [label=left:$y_3$] {};
         \node[vertex] (n15) at (4.5,.75)  [label=left:$y_4$] {};
          \node[vertex] (n16) at (3,1.5)  [label=left:$y_5$] {};
           \node[vertex] (n17) at (1.5,2.25)  [label=left:$y_6$] {};

\draw[edge] (n1) to (n2);
\draw[edge] (n2) to (n3);
\draw[edge] (n3) to (n4);
\draw[edge] (n4) to (n5);
\draw[edge] (n11) to (n2);
\draw[edge] (n13) to (n12);
\draw[edge] (n13) to (n3);
\draw[edge] (n14) to (n15);
\draw[edge] (n15) to (n16);
\draw[edge] (n16) to (n17);
\draw[edge] (n17) to (n4);

\end{tikzpicture}
\caption{}
\label{default}
\end{center}
\end{figure}

The converse is not true. For  if we have a rro in $D_{\Phi^{\epsilon}_{\alpha}}$ such as  the directed graph as in Fig 1 , then it does not satisfy the two condition  of the theorem but  $\alpha $ could have a $rro$ that satisfies the conditions in the theorem. For suppose that each $x_i$ and $y_j$ has exactly a one cycle and a branch and assume that each branch of $y_j$ is mapped to the cycle and each branch in $x_i$ is mapped to a branch then the branches of $x_i$ forms a right ray that satisfies the conditions of the theorem in $D_\alpha$.

\begin{lemma}
 Let $\alpha,\epsilon \in T (S)$, where S is infinite, $\epsilon$ is an idempotent  and $\alpha \in C(\epsilon)$ . Let $A$ be a component of $D_{\Phi^{\epsilon}_{\alpha}}$ that has a double ray $W = \langle... x_{-1} x_0 x_1 ...\rangle$  . 
 Then 
  \begin{enumerate} 
 \item the 1-cycles in each $x_i$ forms a double ray in $D_\alpha$.
 \item if $W$ has a finite branch of length $k$ then $D_\alpha$ will also have a finite branch of length $k$.
 \item if $W$ has an  infinite branch then $D_\alpha$ will also have an infinite branch.
   \end{enumerate}
\end{lemma}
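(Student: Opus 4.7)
The guiding idea is that since $\epsilon$ is idempotent, each connected component $\gamma$ of $\epsilon$ has a unique fixed point $e_\gamma$ (the vertex of its $1$-cycle), and Theorem~\ref{thme} ensures that whenever $\Phi^{\epsilon}_\alpha(\gamma) = \delta$, one has $\alpha(e_\gamma) = e_\delta$. Hence $\gamma \mapsto e_\gamma$ is an injection from the vertex set of $D_{\Phi^{\epsilon}_\alpha}$ into $S$ that intertwines $\Phi^{\epsilon}_\alpha$ with $\alpha$, and my plan is to use this fixed-point lift to transport the relevant sub-digraphs of $D_{\Phi^{\epsilon}_\alpha}$ into $D_\alpha$ as sub-digraphs of the same shape.

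For (1), setting $e_i := e_{x_i}$ immediately yields a pairwise distinct sequence with $\alpha(e_i) = e_{i+1}$, so $\langle \ldots e_{-1}\,e_0\,e_1 \ldots \rangle$ is a double ray in $D_\alpha$. For (2), I lift a finite branch $[y_0\,y_1\ldots y_k = x_j]$ of $W$ to the chain $[f_0\,f_1\ldots f_k = e_j]$ in $D_\alpha$, where $f_i := e_{y_i}$, and verify that this chain is itself a finite branch of the lifted double ray. Two of the three required conditions are immediate: $f_k = e_j$ lies on the double ray by (1), and $f_{k-1}$ does not, because $y_{k-1}$ does not lie on $W$ and the lift is injective. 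For the third, that $f_0$ is an initial vertex of $D_\alpha$, I argue that any preimage $z$ of $f_0$ would belong to some component $\gamma'$ of $\epsilon$ with $\Phi^{\epsilon}_\alpha(\gamma') = y_0$, contradicting the fact that $y_0$ is initial in $D_{\Phi^{\epsilon}_\alpha}$. Part (3) is handled in parallel: an infinite branch $\langle\ldots y_2\,y_1\,y_0 = x_j]$ of $W$ lifts to the left ray $\langle\ldots f_2\,f_1\,f_0 = e_j]$ in $D_\alpha$, which is an infinite branch of the lifted double ray because $f_0$ lies on it and $f_1$ does not.

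The only step that is not pure transport of structure is the initial-vertex condition in (2); everything else follows from the intertwining identity $\alpha(e_\gamma) = e_{\Phi^{\epsilon}_\alpha(\gamma)}$ supplied by Theorem~\ref{thme}, together with the observation that distinct components of $\epsilon$ have disjoint vertex sets and hence distinct fixed points. I expect this single preimage argument to be the only non-routine point of the proof.
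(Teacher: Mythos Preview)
Your proposal is correct and follows essentially the same approach as the paper: both lift the relevant sub-digraphs of $D_{\Phi^{\epsilon}_{\alpha}}$ to $D_\alpha$ via the $1$-cycles (fixed points) of the components of $\epsilon$, using the intertwining $\alpha(e_\gamma)=e_{\Phi^{\epsilon}_\alpha(\gamma)}$ coming from Theorem~\ref{thme}. Your write-up is in fact more careful than the paper's, which simply asserts that the lifted chain is a finite (resp.\ infinite) branch without verifying the initial-vertex condition you single out; your preimage argument for that point is the one non-automatic step and is correct.
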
 
\begin{proof} 
 \begin{enumerate} 
\item
Suppose that  $D_{\Phi^{\epsilon}_{\alpha}}$ has a component $A$ that has a double ray $W = \langle... x_{-1} x_0 x_1 ...\rangle$  and that $A$ is the join of the double ray $W = \langle... x_{-1} x_0 x_1 ...\rangle$ and its branches. Now since each $\{.., x_{-1}, x_0 , x_1,  x_2.... \}$  is a connected component that contains a one cycle,  the one-cycles in $\{.., x_{-1}, x_0 , x_1,  x_2.... \}$ will be a double ray  say $[x'_{-1} x'_0 x'_1 x'_2.... \rangle$ in $D_\alpha$ and  the branches of $\{x_{-1} , x_0 , x_1,  x_2.... \}$ could introduce a branch of finite or infinite  length in $D_\alpha$ 
\item Suppose that $W$ has a finite branch  of length $k$ in $D_{\Phi^{\epsilon}_{\alpha}}$, then  one cycles in each component of the finite branch will form a finite branch of$[x'_{-1} x'_0 x'_1 x'_2.... \rangle$ in $D_\alpha$.
\item Suppose that $W$ has a infinite branch  of length $k$ in $D_{\Phi^{\epsilon}_{\alpha}}$. then the one cycles in each component of the infinite branch will form a finite branch of$[x'_{-1} x'_0 x'_1 x'_2.... \rangle$ in $D_\alpha$.
 \end{enumerate}

\end{proof}

Let   $W = \langle... x_{-1} x_0 x_1 ...\rangle$ be a double ray $D_{\Phi^{\epsilon}_{\alpha}}$ then the above lemma  says that  in $D_\alpha$ that there exists a double ray and further if $D_{\Phi^{\epsilon}_{\alpha}}$ has a finite branch of length $k $ then a branch of length $k$ exists but there could exist a brach of greater length or even an infinite branch.

\begin{thm} Let $\alpha,\epsilon \in T (S)$, where S is infinite, $\epsilon$ is an idempotent  and $\alpha \in C(\epsilon)$ such that the stabiliser of $\alpha $ is $s$
  Then $\alpha$ is Cayley if $D_{\Phi^{\epsilon}_{\alpha}}$ has a double ray  $W = \langle... x_{-1} x_0 x_1 ...\rangle$ such that
  \begin{enumerate}
\item if $s >0$ then $W$ has a finite branch at $x_i$ of length $s$;
\item $W$ has an infinite branch at each $x_j$ with $ j > i$.
\end{enumerate}

 \end{thm}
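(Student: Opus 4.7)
The plan is to reduce to Theorem 2(3) by lifting the required double-ray structure from $D_{\Phi_\alpha^\epsilon}$ to $D_\alpha$ via the preceding lemma. First I would verify that $D_\alpha$ satisfies the structural hypotheses of Theorem 2(3): every component has a unique cycle or a double ray, and $D_\alpha$ has at least one infinite branch. The first follows from the existence of the finite stabilizer $s$, since $\alpha^s(x) \in \mathrm{sim}(\alpha)$ for every $x$ forbids any component of type $rro$ (whose vertices are never in $\mathrm{sim}(\alpha)$); Proposition 1 then leaves only the two admissible types. The second is immediate from condition (2) combined with part (3) of the preceding lemma, which turns an infinite branch at any $x_j$ into an infinite branch in $D_\alpha$.

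Next, I would produce the double ray in $D_\alpha$ witnessing conditions (i) and (ii) of Theorem 2(3)(b). By part (1) of the preceding lemma, the $\epsilon$-fixed points of the components $\ldots, x_{-1}, x_0, x_1, \ldots$ assemble into a double ray $W' = \langle \ldots x'_{-1} x'_0 x'_1 \ldots \rangle$ in $D_\alpha$. For each $j > i$, part (3) of the lemma lifts the infinite branch of $W$ at $x_j$ to an infinite branch of $W'$ at $x'_j$, which is exactly condition (ii). When $s > 0$, part (2) of the lemma lifts the length-$s$ branch of $W$ at $x_i$ to a finite branch of $W'$ at $x'_i$ of length at least $s$.

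The hard part is to pin this lifted branch to have length exactly $s$, since the lemma only yields a lower bound and (as observed in the remark following it) the lift could in principle be longer or even infinite. Here I would use the stabilizer hypothesis: since $s = \sup_t(\alpha)$ by the characterisation of the stabilizer recalled in the preliminaries, every twig in $D_\alpha$ has length at most $s$, and the lifted branch, terminating on $W' \subseteq \mathrm{sim}(\alpha)$, is necessarily a twig. Combining the two bounds fixes the length at exactly $s$, verifying condition (i). With all hypotheses of Theorem 2(3) now in hand, that theorem delivers $\alpha$ as a Cayley function.
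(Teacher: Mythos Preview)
Your approach is the paper's approach: lift the double-ray data from $D_{\Phi^\epsilon_\alpha}$ to $D_\alpha$ via the preceding lemma and then invoke Theorem~2(3). In fact you are considerably more careful than the paper, which gives only a one-line sketch; your explicit check that the stabilizer hypothesis kills all components of type $rro$ is exactly the content of the remark the paper places \emph{after} the proof.

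One correction, though it does not break your argument. Part~(2) of the lemma already gives a finite branch of length \emph{exactly} $s$: the $1$-cycles $y'_0,\dots,y'_s=x'_i$ of the components $y_0,\dots,y_s$ form a chain of $s$ edges, and $y'_0$ is an initial vertex of $D_\alpha$ because $y_0$ is initial in $D_{\Phi^\epsilon_\alpha}$ (no $\epsilon$-component maps into $y_0$, hence nothing in $S$ maps into $\mathrm{dom}(y_0)$ under $\alpha$). The remark following the lemma is warning that \emph{other}, longer branches may also appear, not that this particular lift might be longer. So your ``hard part'' is not actually hard, and the twig argument you use to cap the length is both unnecessary and not quite right: if some intermediate $y_j$ happens to lie on an infinite branch of $W$ in $D_{\Phi^\epsilon_\alpha}$, then $y'_j\in\mathrm{sim}(\alpha)$ and the lifted chain fails to be a twig. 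Fortunately Theorem~2(3)(b)(i) asks only for a finite branch of length $s$, which you already have.
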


\begin{proof} Suppose that  $D_{\Phi^{\epsilon}_{\alpha}}$ have a double ray  $W = \langle... x_{-1} x_0 x_1 ...\rangle$, that satisfies the above two condition  then the cycle $D_{\alpha/_{Z\cap L}}$  will have has a double ray  $W = \langle... x_{-1} x_0 x_1 ...\rangle$ that satisfies the two conditions which implies that $\alpha $ is Cayley. 
\end{proof}
In general  if $D_{\Phi^{\epsilon}_{\alpha}}$ has a double ray  $W = \langle... x_{-1} x_0 x_1 ...\rangle$ $D_\alpha$ could have a component of type rro but since $\alpha$ has a stabiliser $\alpha$ will have no components of type rro.
The converse is not true. For example if we have a rro in $D_{\Phi^{\epsilon}_{\alpha}}$ such as  the directed graph is the following,

\begin{figure}[htbp]
\begin{center}
\begin{tikzpicture}

\tikzset{vertex/.style = {shape=circle,fill=black,minimum size=4pt,
                            inner sep=0pt}}
\tikzset{edge/.style = {->,> = latex'}}

 \node[vertex] (n1) at (0,0)  [label=left:$x_0$] {};
 \node[vertex] (n2) at (0,1) [label=left:$x_1$] {};
  \node[vertex] (n3) at (0,2)  [label=left:$x_2$] {};
   \node[vertex] (n4) at (0,3)  [label=left:$x_3$] {};
    \node[vertex] (n5) at (0,4)  [label=left:$$] {};
     \node[vertex] (n6) at (0,5)  [label=left:$$] {};
    \node[vertex] (n7) at (0,6)  [label=left:$$] {};
       \node[vertex] (n8) at (0,-1)  [label=left:$$] {};

     \node[vertex] (n11) at (1,1)  [label=above:$z_0$] {};
      \node[vertex] (n12) at (2,1)  [label=above:$z_1$] {};
       \node[vertex] (n13) at (3,1)  [label=above:$z_2$] {};
        \node[vertex] (n14) at (1,2)  [label=above:$y_0$] {};
         \node[vertex] (n15) at (2,2)  [label=above:$y_1$] {};
          \node[vertex] (n16) at (3,2)  [label=above:$y_2$] {};
           \node[vertex] (n17) at (4,1)  {};
            \node[vertex] (n18) at (4,2)  {};
                     \node[vertex] (n19) at (5,2)  {};
                              \node[vertex] (n20) at (6,2)  {};
                                       \node[vertex] (n21) at (5,1)  {};
                                                \node[vertex] (n22) at (6,1)  {};

\draw[edge] (n1) to (n2);
\draw[edge] (n2) to (n3);
\draw[edge] (n3) to (n4);
\draw[edge] (n4) to (n5);
\draw[edge] (n11) to (n2);
\draw[edge] (n12) to (n11);
\draw[edge] (n13) to (n12);

\draw[edge] (n14) to (n3);
\draw[edge] (n15) to (n14);
\draw[edge] (n16) to (n15);

\end{tikzpicture}
\caption{}
\label{default}
\end{center}
\end{figure}

 then it does not satisfy the two condition  of the theorem but  $\alpha $ could be Cayley. For, suppose that each $x_i$ and $y_j$ has exactly a one cycle and a branch and assume that each branch of $y_j$ is mapped to the cycle and each branch in $x_i$ is mapped to a branch then the branches of $x_i$ forms a double ray that has no infinite branch. Then $\alpha$ is Cayley if the stabiliser of $\alpha$ is 0. If $\alpha$ has stabiliser $s$ then we can assume that $x_1$ ... $x_s$ has a one cycle and two branches and each branch is mapped to distinct branches then again $\alpha$ is Cayley but $D_{\Phi^{\epsilon}_{\alpha}}$ does not satisfy the conditions of the theorem.

{  }

\end{document}